\def\minwrt[#1]{\underset{#1}{\text{minimize }}}
\def\argminwrt[#1]{\underset{#1}{\text{arg min }}}
\def\maxwrt[#1]{\underset{#1}{\text{maximize }}}
\def\argmaxwrt[#1]{\underset{#1}{\text{arg max }}}
\def\maxemphwrt[#1]{\underset{#1}{\text{\emph{maximize} }}}
\def\bK{{\bf K}}
\def\bM{{\bf M}}
\def\bU{{\bf U}}
\def\bC{{\bf C}}
\def\ccT{{\mathcal{T}}}
\def\ccP{{\mathcal{P}}}
\def\RR{{\mathbb{R}}}
\newcommand{\diag}{\text{diag}}
\newcommand{\SpeciesMtx}{\mathfrak{R}}
\newcommand{\SpeciesMtxElem}{\mu}
\newcommand{\newC}{\bC}
\newtheorem{theorem}{Theorem}
\newtheorem{remark}{Remark}
\title{\LARGE \bf
Mean field type control with species dependent dynamics\\ via structured tensor optimization
}
\author{Axel Ringh, Isabel Haasler, Yongxin Chen, Johan Karlsson
\thanks{This work was supported by the Wallenberg AI, Autonomous Systems and Software Program (WASP) funded by the Knut and Alice Wallenberg Foundation, the Knut and Alice Wallenberg Foundation under grant KAW 2021.0274, the NSF under grant 1942523 and 2206576,
the Swedish Research Council (VR) under grant 2020-03454, and KTH Digital Futures.}
\thanks{A.~Ringh is with Department of Mathematical Sciences, Chalmers University of Technology and the University of Gothenburg, SE-412 96 Gothenburg, Sweden. {\tt\small axelri@chalmers.se}}%
\thanks{I.~Haasler is with Signal Processing Laboratory, LTS 4, \'{E}cole Polytechnique F\'{e}d\'{e}rale de Lausanne, Lausanne, Switzerland. {\tt\small isabel.haasler@epfl.ch}}%
\thanks{Y.~Chen is with the School of Aerospace Engineering, Georgia Institute of Technology, Atlanta, GA, USA. {\tt\small yongchen@gatech.edu}}\\%
\thanks{J.~Karlsson is with the Division of Optimization and Systems Theory, Department of Mathematics, KTH Royal Institute of Technology, Stockholm, Sweden. {\tt\small johan.karlsson@math.kth.se}}%
}
\begin{document}

\maketitle
\thispagestyle{empty}
\pagestyle{empty}

\renewcommand{\algorithmicrequire}{\textbf{Input:}}
\renewcommand{\algorithmicensure}{\textbf{Output:}}
\algsetup{indent=5pt}

\begin{abstract}
In this work we consider
mean field type control problems
with multiple species that have different dynamics.
We formulate the discretized problem using a new type of entropy-regularized multimarginal optimal transport problems where the cost is a decomposable structured tensor. 
A novel algorithm for solving such problems is derived, using this structure and leveraging recent results in 
entropy-regularized optimal transport.
The algorithm is then demonstrated on a numerical example in robot coordination problem for search and rescue, where three different types of robots are used to cover a given area at minimal cost.
\end{abstract}

\section{Introduction}\label{sec:introduction}

In recent years, mean field type control problems 
have emerged as a powerful tool for analysis and control of large-scale dynamical systems consisting of subsystem that are also dynamical systems.
It provides a framework for modeling the behavior of a large population of interacting agents, where
i) each individual's decision is negligible to others at the individual level, but where the actions are significant when aggregated, and ii) the number of agents is too large to model each one individually. In such cases, one instead often seek
a model that explains the aggregate behavior of the population  \cite{huang2006large, lasry2007mean, huang2012social, djehiche2017mean}.
Mean field type control problems are density optimal control problems where the density abides to a controlled Fokker-Planck equation with distributed control \cite{lasry2007mean, cardaliaguet2015second, bensoussan2013mean, chen2018steering}. For example, potential mean field games are a particular type of such models \cite{lasry2007mean, BenCarDiNen19}. 

In basic formulations of mean field type control problems, all agents are equivalent in the sense that they all have the same dynamics and they all have the same objective function which they try to minimize. However, an important generalization
is the multispecies setting, where the population consists of several different types of agents \cite{huang2006large, lasry2007mean, lachapelle2011mean, achdou2017mean, cirant2015multi, bensoussan2018mean}.
This type of problem occurs in, e.g., coordination of multiple types of robots, where the robots have different properties (such as movement speed, movement capabilities, cost, etc.), but they still have a common goal of achieving a given task as efficiently as possible.

Recently optimal transport has been successfully used to address a number of problems in control, see, e.g., \cite{bonnet2021necessary, terpin2023dynamic, zorzi2020optimal}.
In the seminal paper \cite{benamou2000computational}, certain optimal transport problems were formulated as density control problems over the continuity equation, and this idea can be generalized to allow for optimal transport problems that have general underlying dynamics \cite{hindawi2011mass, chen2017optimal}. Moreover, the recently developed Sinkhorn method,  for numerically solving large-scale optimal transport problems \cite{cuturi2013sinkhorn, peyre2019computational}, is closely related to the density control formulation in \cite{benamou2000computational}. In fact, the added entropy regularization leading to the Sinkhorn method corresponds to adding a stochastic term to the underlying particle dynamics, which leads to a controlled Fokker–Planck equation in the density control problem \cite{chen2016relation, chen2020stochastic}.
This has been used to develop methods for solving potential mean field games \cite{BenCarDiNen19, ringh2021efficient, ringh2021graph}, 
by formulating the potential mean field game as a mean field type control problem and then formulated the latter as a multimarginal optimal transport problem. Due to the Markov property, such multimarginal optimal transport problems have a graph-structured cost, and this
type of graph structures has also been used to develop efficient computational methods to solve problems in control \cite{haasler2020optimal, haasler2021multimarginal}, estimation \cite{haasler19ensemble}, and information fusion \cite{elvander2020multi}.

In this paper, we consider mean field type control problems with multiple species that have different dynamics. First, we reformulate the problem as a multimarginal optimal transport problem. However, in this case the resulting problem turns out to have a cost function that is no longer graph-structured; instead, the cost function is a decomposable structured tensor, i.e., a multi-indexed matrix, which can be represented by a hypergraph. Next, for this type of structured multimarginal optimal transport problem, we develop an efficient solution algorithm. Finally, we illustrate the developed method on an example in coordination of multiple types of robots in a search-and-rescue type mission.

The outline of the paper is as follows: in Section \ref{sec:background} we briefly review the areas of multimarginal optimal transport, and mean field control problems. In Section \ref{sec:main} we formulate the multispecies mean field control problem as a structured multimarginal optimal transport problems, discretize it,  and present an efficient numerical method for computing the optimal solution of the latter. In Section \ref{sec:example} we present a detailed numerical example of robot coordination, and finally in Section \ref{sec:conclusions} we present conclusions and future directions.

\section{Background}\label{sec:background}

\subsection{Multimarginal optimal transport} \label{subsec:ot}

The optimal transport problem is a classic problem in mathematics that involves finding the most efficient way of moving mass to transform one distribution into another \cite{villani2003topics}.
The multimarginal optimal transport problem is an extension of this concept that deals with multiple distributions \cite{ruschendorf1995optimal, gangbo1998optimal, pass2015multi, benamou2015bregman}.
Here, we focus on the discrete case, where the marginal distributions are represented by a finite set of nonnegative vectors\footnote{To simplify the notation, we assume that all the marginals have the same number of elements, i.e., $\mu_j\in \RR^N$. This can easily be relaxed.} $\mu_1, \ldots, \mu_\ccT \in \RR^N_+$. The transport plan and cost are both represented by $\ccT$-mode tensors, $\bM \in{\RR^{N^\ccT}}$ and $\bC \in{\RR^{N^\ccT}}$, respectively, and the marginal distributions of the transport plan are given by projections $P_j(\bM) \in \RR^N_+$, where%
\footnote{For notational convenience, we will in the remainder of the text write this type of sum as $\sum_{i_1, \ldots, i_{j-1}, i_{j+1},\ldots, i_\ccT}\bM_{i_1 \ldots i_\ccT}.$}
\begin{equation*}
(P_j(\bM))_{i_j} := 
\sum_{i_1=1}^N \cdots \sum_{i_{j-1}=1}^N
\sum_{i_{j+1}=1}^N \cdots \sum_{i_{\ccT}=1}^N   \bM_{i_1 \ldots i_\ccT}.
\end{equation*}

The discrete multimarginal optimal transport problem can then be formulated as
\begin{subequations}\label{eq:discr_multimarginal}
\begin{align}
\minwrt[\bM \in \RR^{N^{\ccT}}_+] & \quad \langle \bC, \bM \rangle \\
\text{subject to } & \quad P_j(\bM) = \mu_j,\; j \in \Gamma,
\end{align}
\end{subequations}
where $\langle\bC, \bM\rangle := \sum_{i_1, \ldots, i_\ccT} \bC_{i_1 \ldots i_\ccT} \bM_{i_1 \ldots i_\ccT}$ is the standard inner product, and where 
 we impose constraints on the marginals corresponding to the index set $\Gamma \subset \{1, \ldots, \ccT\}$.
Although the optimal transport problem \eqref{eq:discr_multimarginal} is a linear program, it can be challenging to solve it numerically due to the large number of variables. A popular method for approximately solving \eqref{eq:discr_multimarginal} is to perturb the problem by adding a small $\epsilon > 0$ times the entropy term
\[
D(\bM) := \sum_{i_1, \ldots, i_\ccT} \big( \bM_{i_1 \ldots i_\ccT}\log(\bM_{i_1 \ldots i_\ccT}) - \bM_{i_1 \ldots i_\ccT} + 1 \big)
\]   
to the cost function and use Sinkhorn iterations to solve the resulting problem \cite{cuturi2013sinkhorn, peyre2019computational}.
The optimal transport plan for the perturbed problem is
of the form
$\bM = \bK \odot \bU$,
where $\bK = \exp(- \bC/ \epsilon)$, and 
$\bU_{i_1 \dots i_\ccT} = \prod_{j \in \Gamma} (u_j)_{i_j}$
a rank-one tensor,%
\footnote{The notation $(u_j)_{i_j}$ means the $i_j$th element of the vector $u_j$; we will use analogous notation for tensors in general.}
see \cite{benamou2015bregman, elvander2020multi}.
Sinkhorn's method iteratively updates $u_j$ as
\begin{equation} \label{eq:sinkhorn_multi}
u_j \leftarrow u_j \odot \mu_j \oslash P_j(\bK \odot \bU), \quad \text{ for } j \in \Gamma,
\end{equation}
where $ \odot$ and $\oslash$ means pointwise multiplication and pointwise division, respectively,
and the algorithm converges (linearly) to an optimal solution of the perturbed problem \cite{luo1992convergence,tseng1990dual}.
In the multimarginal case, computing $P_j(\bK \odot \bU)$ suffers from the curse of dimensionality.
However, in some cases, structures in the underlying cost $\bC$ can be used to circumvent these issues, for instance when the cost decouples into pairwise interactions according to a graph-structure \cite{benamou2015bregman, haasler19ensemble, haasler2020optimal, elvander2020multi, haasler2021multimarginal, haasler2021scalable, haasler2020multi, singh2022inference, altschuler2020polynomial, ringh2021efficient, ringh2021graph, fan2022complexity}.

\subsection{Multispecies mean field control problems}

Consider a set of infinitesimal agents moving in a state space $X\subset \RR^n$. Assume that they belong to $L$ different classes, and that each infinitesimal agent of species $\ell \in \{ 1, \ldots, L \}$ obeys the dynamics
\begin{equation} 
dx_\ell(t) = f_\ell(x_\ell)dt + B_\ell(x_\ell)\big(v_\ell dt + \sqrt{\epsilon}dw_\ell\big), \label{eq:SDE_species}
\end{equation}
subject to the initial condition $x_\ell(0)=x_{0, \ell}$, where the latter is a realization from a distribution $\rho_{0,\ell}(x)$.
Moreover, $w_\ell$, for $\ell \in \{ 1, \ldots, L \}$, are $n$-dimensional Wiener processes that are independent of each other.
We also assume that $f_\ell:X\to \RR^n$ and $B_\ell:X\to \RR^{n\times n} $ are continuously differentiable with bounded derivatives. Then, under suitable conditions on the (Markovian) feedback $v_\ell$, there exists a unique solution almost surely
to \eqref{eq:SDE_species}, cf.~\cite[Thm.~V.4.1]{fleming1975deterministic}, \cite[pp.~7-8]{bensoussan2013mean}.
Moreover, the density of the particles of species $\ell$, $\rho_\ell$, is the solution of a controlled Fokker-Planck equation. Therefore, the multispecies mean field control problem
is defined as the density optimal control problem
\begin{subequations}\label{eq:densitycontrol_multispecies}
    \begin{align}
        \minwrt[\substack{\rho, \rho_\ell, v_\ell}] & \; \int_0^1 \!\!\! \int_X \sum_{\ell = 1}^L \frac{1}{2} \|v_\ell\|^2 \rho_\ell \, dx dt \nonumber \\
        & + \! \int_0^1 \!\! \mathcal{F}_t(\rho(t,\cdot)) dt  + \mathcal{G}(\rho(1,\cdot)) \label{eq:densitycontrol_multispecies1} \\
        & + \sum_{\ell=1}^L \left( \int_0^1 \mathcal{F}^\ell_t(\rho_\ell(t, \cdot)) dt + \mathcal{G}^\ell(\rho_\ell(1, \cdot)) \right) \nonumber \\        
        \text{subject to } & \;  \frac{\partial \rho_\ell}{\partial t} + \nabla\cdot((f_\ell + B_\ell v_\ell) \rho_\ell ) \label{eq:densitycontrol_multispecies2} \\
        & \quad - 
\frac{\epsilon}{2} \sum_{i,k=1}^n \frac{\partial^2 ((\sigma_\ell)_{ik}  \rho_\ell)}{\partial x_i \partial x_k}
= 0, \quad \ell = 1, \ldots L,  \nonumber \\
        & \; \rho_\ell(0,\cdot) = \rho_{0,\ell}, \quad \rho(t, \cdot) = \sum_{\ell=1}^L \rho_\ell(t, \cdot), \label{eq:densitycontrol_multispecies3}	
    \end{align}
\end{subequations}
cf.~\cite[Chp.~2 and 4]{bensoussan2013mean}.
Here $\nabla \cdot$ denotes the divergence operator, $\sigma_\ell := B_\ell B_\ell^T$,
and  $\mathcal{F}_t$, $\mathcal{G}$, $\mathcal{F}^\ell_t$ and $\mathcal{G}^\ell$
are functionals on on $L_2\cap L_\infty$. These functionals are the costs that the species are trying to minimize by their behavior: $\mathcal{F}^\ell_t$ and $\mathcal{G}^\ell$ are species-dependent costs, where the former is the running cost and the latter is the terminal cost. $\mathcal{F}_t$ and $\mathcal{G}$ are cooperative costs that link the species together by acting on the total density of all species. We assume that all these functionals
are proper, convex, and lower-semicontinuous.
Moreover, we assume that $\mathcal{F}_t$ and $\mathcal{F}^\ell_t$ are piece-wise continuous in time.

\section{Discretization and solution via tensor optimization}\label{sec:main}

Recently, an approach for solving some types of potential mean field games was
proposed in \cite{ringh2021efficient, ringh2021graph}. The approach is based on formulating the problem on path space, i.e., on $C([0,1], X) := $ the set of continuous functions from $[0,1]$ to $X$, and then discretizing the problem in time and space, which results in a tensor optimization problem.
Here we generalize this approach in order to derive a numerical solution algorithm for multispecies mean field control problems of the form \eqref{eq:densitycontrol_multispecies}.

\subsection{Discretization of the problem}
Let $\ccP^{v_\ell}$ denote the distribution of species $\ell$ on path space, induced by the controlled process \eqref{eq:SDE_species}.
Then $\ccP^{v_\ell}_{t} = \rho_\ell(t, \cdot)$, where $\ccP^{v_\ell}_{t}$ is the marginal of $\ccP^{v_\ell}$ corresponding to time $t$, and $\rho_\ell$ is the solution to \eqref{eq:densitycontrol_multispecies2} with initial condition $\rho_\ell(0,\cdot) = \rho_{0,\ell}$.
Moreover, let $\ccP^0_\ell$ denote the corresponding uncontrolled process ($v_\ell \equiv 0$) with initial density $\rho_{0,\ell}$. 
By the Girsanov theorem (see, e.g., \cite[pp.~156-157]{Follmer88}), we get that
\begin{equation}
  \frac{1}{2}\int_X \int_0^1 \!\! \|v_\ell\|^2 \rho_\ell dt dx
  = 
  \epsilon {\rm KL} (\ccP^{v_\ell} \| \ccP^0_\ell) \label{eq:relativeentropy}
\end{equation}
where ${\rm KL} (\cdot \| \cdot)$ is the Kullback-Leibler divergence, see, e.g., \cite{chen2016relation, chen2018steering, BenCarDiNen19,leonard2012schrodinger, leonard2013schrodinger}.
Utilizing \eqref{eq:relativeentropy}, problem \eqref{eq:densitycontrol_multispecies} can be reformulated as an optimization problem over path space measures that corresponds to a generalized entropy-regularized multimarginal optimal transport problem, see \cite{ringh2021efficient, ringh2021graph}.
In particular, discretizing the space into points $x_1,\dots, x_N$, and considering time steps $j\Delta t$, for $j=1,\dots,\ccT$, where $\Delta t = 1/\ccT$, the term \eqref{eq:relativeentropy} takes the form $\langle \bC_\ell, \bM_\ell \rangle + \epsilon D(\bM_\ell)$, where 
the tensor $\bM_\ell\in \RR^{N^{\ccT+1}}$ describes the flow of agents in class $\ell$, and the cost tensor $\bC_\ell\in \RR^{N^{\ccT+1}} $ describes the associated cost of moving agents. More precisely,
$(\bC_\ell)_{i_0 \ldots i_{\ccT}} = \sum_{j=0}^{\ccT-1} C_{\ell i_j i_{j+1}}$,
where $C_{\ell i k}$ 
is the optimal cost for moving a unit mass of species $\ell$ from point $x_i$ to $x_k$ in one time step, i.e.,
\begin{equation}\label{eq:C_elem}
    C_{\ell i k} \; = \;
    \begin{cases}
    \displaystyle \; \minwrt[{v\in L_2([0,\Delta t])}] & \; \displaystyle \int_{0}^{\Delta t} \frac{1}{2} \| v \|^2 dt \\
    \; \text{subject to } & \; \displaystyle \dot{x} = f_\ell(x) + B_\ell(x)v \\
    & \; \displaystyle x(0) = x_{i}, \quad x(\Delta t) = x_{k}.
    \end{cases}
\end{equation}
Thus, the discretization of \eqref{eq:densitycontrol_multispecies} takes the form
\begin{subequations}\label{eq:multispecies_discrete}
    \begin{align}
    \!\!\!\!\! \minwrt[\substack{\bM_\ell, \mu_j, \mu_j^{(\ell)}\\ j = 1,\ldots, \ccT \\ \ell = 1, \ldots, L}] & \, \sum_{\ell = 1}^L \big( \langle \bC_\ell, \bM_\ell \rangle + \epsilon D(\bM_\ell) \big) + \Delta t \sum_{j=1}^{\ccT-1} F_{j}(\mu_j)  \nonumber \\[-12pt]
    & + \! G(\mu_\ccT) \! + \! \sum_{\ell = 1}^L \!\! \left( \! \Delta t \! \sum_{j=1}^{\ccT-1} \! F_j^\ell ( \mu_j^{(\ell)}) \! + \! G^\ell(\mu_\ccT^{(\ell)}) \! \right) \label{eq:multispecies_discrete1}\\
    \!\!\!\!\! \text{subject to } & \, P_j(\bM_\ell) = \mu_j^{(\ell)},\, j=1, \ldots, \ccT, \, \ell = 1, \ldots, L, \label{eq:multispecies_discrete2} \\
    & \, P_0(\bM_\ell) = \mu_{0, \ell}, \; \ell = 1, \ldots, L, \label{eq:multispecies_discrete3} \\
    & \, \sum_{\ell = 1}^L \mu_j^{(\ell)} = \mu_j, \;\;\; j=0, \ldots, \ccT \label{eq:multispecies_discrete4}
    \end{align}
\end{subequations}
where $\mu_{0, \ell}$ are discrete approximations of $\rho_{0, \ell}$.

\subsection{Solution method based on tensor optimization}
Note that \eqref{eq:multispecies_discrete} consists of $L$ coupled tensor optimization problems as in \cite{ringh2021efficient}, which are coupled through the constraint \eqref{eq:multispecies_discrete4} and the cost imposed on $\mu_j$, for $j = 1,\ldots, \ccT$, in \eqref{eq:multispecies_discrete1}.
Next, we reformulate \eqref{eq:multispecies_discrete} into a single tensor optimization problem (cf. \cite{ringh2021graph, haasler2021scalable}) by ``stacking together'' the tensors $\bM_\ell$ for $\ell=1,\dots,L$ to form a $(\ccT+2)$-mode tensor $\bM \in \RR^{L\times N^{\ccT+1}}$, where the index $-1$ refers to the species. That is, its elements are given by $\bM_{\ell i_{0} \dots i_\ccT} = (\bM_\ell)_{i_{0} \dots i_\ccT}$, and $\bM_{\ell i_{0} \dots i_\ccT}$ is the amount of mass of species $\ell$ that moves along the path $x_{i_{0}}, \dots, x_{i_\ccT}$.
Therefore, the additional marginal $\mu_{-1} = (P_{-1}(\bM))_\ell \in \RR_+^L$ describes the total mass of the densities for the different species.
Moreover, the bi-marginal projection $P_{-1, j}(\bM)\in \RR^{L\times N}$, defined by
\[
(P_{-1, j}(\bM))_{\ell i_j} := \sum_{i_0,\dots,i_{j-1},i_{j+1},\dots,i_\ccT} \bM_{\ell i_{0} \dots i_\ccT}, 
\]
satisfies $P_{-1, j}(\bM) = [\mu_j^{(1)}, \ldots, \mu_j^{(L)}]^T$, and $P_j(\bM)$ is the total distribution $\mu_j$ at time $j\Delta t$.

Finally note that $\sum_{\ell = 1}^L D(\bM_\ell) = D(\bM)$, and hence problem \eqref{eq:multispecies_discrete} can be written as the tensor optimization problem
\begin{subequations}  \label{eq:multispecies_discrete_rewritten}
\begin{align}
    \minwrt[\substack{\bM, \mu_j, R^{(-1, j)}\\ j=1,\ldots, \ccT}] & \; \langle \newC, \bM \rangle + \epsilon D(\bM) + \Delta t \sum_{j=1}^{\ccT-1} F_{j}(\mu_j) + G(\mu_\ccT) \nonumber \\[-10pt]
    & \; + \sum_{j=1}^{\ccT-1} \mathscr{F}_j^{L}( R^{(-1, j)} ) + \mathscr{G}^{L}(R^{(-1, \ccT)}) \label{eq:multispecies_discrete_rewritten1}\\
    \text{subject to } & \; P_j(\bM) = \mu_j,\qquad\qquad \; j=1,\ldots, \ccT, \label{eq:multispecies_discrete_rewritten2} \\
    & \; P_{-1,j}(\bM) = R^{(-1, j)}, \quad j=1,\ldots, \ccT, \label{eq:multispecies_discrete_rewritten4} \\
    & \; P_{-1,0}(\bM) = \SpeciesMtx^{(-1, 0)} , \label{eq:multispecies_discrete_rewritten3}
\end{align}
\end{subequations}
where $\SpeciesMtx^{(-1,0)}=[\SpeciesMtxElem_{0, 1},\ldots, \SpeciesMtxElem_{0, L}]^T\in \RR_+^{L\times N}$ and where
$\mathscr{F}_j^{L}(R^{(-1, j)}) = \sum_{\ell = 1}^L \Delta t F_j^\ell( \mu_j^{(\ell)})$, $j = 1, \ldots, \ccT$,
and similarly for $\mathscr{G}^{L}$.
Here, the cost tensor $\newC$ is given by
$\newC_{\ell i_0 \ldots i_\ccT} = \sum_{j=0}^{\ccT-1} C_{\ell i_j i_{j+1}}$, which means that the problem has a structure as the hypergraph illustrated in Figure~\ref{fig:multispecies_triangles}.

 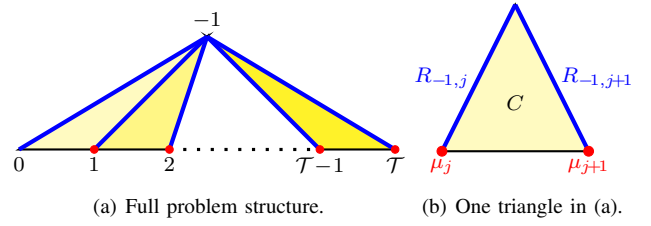
\begin{figure}
\subfigure[Full problem structure.]{
	\begin{tikzpicture}[scale=0.5]
	\footnotesize

 \coordinate (-1) at (5,3); 
\coordinate (0) at (0,0);
\coordinate (1) at (2,0);
\coordinate (2) at (4,0);
\coordinate (T-1) at (8,0);
\coordinate (T) at (10,0);

\draw[thick,fill=yellow!30] (-1) node[anchor=south]{$-1$} -- (0) node[anchor=north]{$0$} -- (1) node[anchor=north]{$1$} -- cycle;
\draw[thick,fill=yellow!60] (-1) -- (1) -- (2) node[anchor=north]{$2$} -- cycle;
\draw[thick,fill=yellow!90] (-1) -- (T-1) node[anchor=north]{$\ccT\!-\!1$} -- (T) node[anchor=north]{$\ccT$} -- cycle;
\draw[loosely dotted, very thick] (2) -- (T-1);

\draw[blue, ultra thick] (-1) -- (0);
\draw[blue, ultra  thick] (-1) -- (1);
\draw[blue, ultra thick] (-1) -- (2);
\draw[blue, ultra thick] (-1) -- (T-1);
\draw[blue, ultra thick] (-1) -- (T);

\draw[red, fill=red]   (1) circle (.1);
\draw[red, fill=red]   (2) circle (.1);
\draw[red, fill=red]   (T-1) circle (.1);
\draw[red, fill=red]   (T) circle (.1);
	\end{tikzpicture} }
	\hspace*{-15pt}
\subfigure[One triangle in (a).]{
	\begin{tikzpicture}[scale=0.65]
	\footnotesize
	\tikzstyle{main}=[circle, fill=white, minimum size = 12mm, thick, draw =black!80, node distance = 10mm]
	
\coordinate (-1) at (1.5, 3) ;
\coordinate (j) at (0,0);
\coordinate (j+1) at (3,0);	
	
\draw[thick,fill=yellow!30] (-1) -- (j)  -- (j+1) -- cycle;		
\draw[blue, ultra thick] (-1) --node[left]{$R_{-\!1,j}$} (j);
\draw[blue, ultra thick] (-1) --node[right]{$R_{-\!1,j\!+\!1\!}$} (j+1);

\draw[red, fill=red]   (j) circle (.1) node[below, red]{$\mu_{j}$}; 
\draw[red, fill=red]   (j+1) circle (.1) node[below, red]{$\mu_{j\!+\!1}$};

\node at (1.5, 1) {$C$};
	
	\end{tikzpicture}	
}

\caption{Illustration of the structure for the multispecies density optimal control problem. Red dots correspond to marginal constraints \eqref{eq:multispecies_discrete_rewritten2}, blue lines to bi-marginal constraints \eqref{eq:multispecies_discrete_rewritten3} and \eqref{eq:multispecies_discrete_rewritten4}, and yellow triangles to cost interactions. Note that marginal $\mu_0$ does not have a constraint, thus for the corresponding triangle in (b), the bottom left node is not colored red.\vspace{-0.2cm}}	
	\label{fig:multispecies_triangles}
\end{figure}

In contrast to previous works, the cost tensor $\newC$ is not composed of pairwise cost interactions, and problem \eqref{eq:multispecies_discrete_rewritten} does not fall into the framework for graph-structured optimal transport and tensor optimization problems \cite{haasler2021multimarginal, ringh2021graph, haasler2020multi}.
However, similar to the setting in Section~\ref{subsec:ot}, the solution to \eqref{eq:multispecies_discrete_rewritten} is of the form $\bM = \bK \odot \bU$, where
\begin{equation}\label{eq:K_multi_species_new}
\bK_{\ell i_{0} \dots i_\ccT} = \prod_{j = 0}^{\ccT - 1} K_{\ell i_{j} i_{j+1}},
\end{equation}
with $K_{\ell i_{j} i_{j+1}} = \exp(- C_{\ell i_{j} i_{j+1}}/\epsilon)$,
and where
\begin{equation}\label{eq:U_multi_species_new}
\bU_{\ell i_{0} \dots i_\ccT} = (U_{-1,0})_{\ell i_{0}} \! \left(\prod_{j = 1}^\ccT (U_{-1,j})_{\ell i_{j}} \! \right) \!\! \left( \prod_{j = 1}^\ccT (u_j)_{i_j} \! \right) \! .
\end{equation}
This can be readily derived using Lagrangian relaxation and is omitted for brevity (cf.~\cite{haasler2021multimarginal, ringh2021graph, haasler2020multi}).
Moreover, the components of the tensor $\bU$ can be found by generalized Sinkhorn iterations \cite{ringh2021graph}.
In particular, the problem can be solved by Algorithm~\ref{alg:multi_species}. in which $^*$ denotes the Fenchel conjugate of a function and $\partial$ denotes the subdifferential (for definitions, see, e.g., \cite{bauschke2017convex}).
Under relatively mild conditions on the cost functions, the algorithm is in fact globally convergent (see \cite[Sec.~III]{ringh2021graph} for details).
Akin to the classical Sinkhorn iterations \eqref{eq:sinkhorn_multi}, the computational bottleneck is to compute the relevant marginal and bi-marginal projections of the tensor $\bM = \bK \odot \bU$.
An efficient way to compute these is described in the following Theorem,
and the structure of these computations are illustrated in Figure~\ref{fig:multispecies_triangle_computational}.

\begin{theorem} \label{thm:projections}
The bi-marginal projections of the tensor $\bM = \bK \odot \bU$, with $\bK$ and $\bU$ as in \eqref{eq:K_multi_species_new} and \eqref{eq:U_multi_species_new}, respectively, on the marginals $-1$ and $j$,
are given by
\begin{equation} \label{eq:proj_j}
P_{-1,j}(\bK \odot \bU) =   \Psi_j \odot \hat{\Psi}_j \odot  U_{-1,j} \diag(u_j),
\end{equation}
for $j=1,\dots,\ccT-1$, and
\begin{align} 
 P_{-1,0}(\bK \odot \bU)  &=  \Psi_0\odot U_{-1,0}  \label{eq:proj_0} \\
P_{-1,\ccT} (\bK \odot \bU) &=    \hat{\Psi}_\ccT \odot U_{-1,\ccT}\diag(u_\ccT).  \label{eq:proj_T}
\end{align}
The components in these expressions are defined as
\begin{equation*}
\hat{\Psi}_j \! = \!
\begin{cases}
\! \hat{S}_K (U_{-1,0})  & j = 1\\
\! \hat{S}_K  ( \hat{\Psi}_{{j-1}}  \odot  U_{-1, j-1} \text{diag}(u_{j-1}) )  , & j=2, \ldots, \ccT
\end{cases}
\end{equation*}
with
\begin{equation}\label{eq:hatSK}
 \left( \hat S_K (A) \right)_{i_1 i_3}  = \sum_{i_2} K_{i_1 i_2 i_3} A_{i_1 i_2},
\end{equation}
and
\begin{equation*}
\Psi_j \! = \! 
\begin{cases}
\! S_K ( U_{-1, \ccT} \text{diag}(u_{\ccT})) & j = \ccT - 1 \\
\! S_K (  \Psi_{j+1} \odot  U_{-1, j+1} \text{diag}(u_{j+1}) ) , & j=0, \ldots, \ccT-2,
\end{cases}
\end{equation*}
with
\begin{equation}\label{eq:SK}
\left( S_K (A) \right)_{i_1 i_2}  = \sum_{i_3} K_{i_1 i_2 i_3} A_{i_1 i_3}.
\end{equation}
\end{theorem}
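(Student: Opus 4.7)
The plan is to exploit two structural features of $\bK\odot \bU$. First, for each fixed species $\ell$, the tensor $\bK$ in \eqref{eq:K_multi_species_new} is a product of adjacent factors $K_{\ell i_k i_{k+1}}$, so $\bK$ has a Markov-chain structure in time. Second, the tensor $\bU$ in \eqref{eq:U_multi_species_new} is a product of terms each depending on a single spatial index. All factors in the summand share the same $\ell$, hence the computation factors species-wise and it suffices to analyze it with $\ell$ fixed. In the bi-marginal $P_{-1,j}(\bK\odot \bU)$, the index $i_j$ is held fixed (not summed), which decouples the sum into a past part over $i_0,\dots,i_{j-1}$ and a future part over $i_{j+1},\dots,i_\ccT$, linked only by the common node $i_j$. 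My first step is to write this factorization out explicitly and pull the $i_j$-only factors $(U_{-1,j})_{\ell i_j}(u_j)_{i_j}$ in front of the sum, reducing \eqref{eq:proj_j} to identifying the past and future sums with $\hat\Psi_j$ and $\Psi_j$.

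Next I would verify the two recursions by induction. For $\hat\Psi_j$, the base case $j=1$ is the single sum $(\hat\Psi_1)_{\ell i_1}=\sum_{i_0} K_{\ell i_0 i_1}(U_{-1,0})_{\ell i_0}$, which matches $\hat S_K(U_{-1,0})$ via \eqref{eq:hatSK} once the preserved first index of $\hat S_K$ is identified with $\ell$. For the inductive step, the forward partial sum factors into $\hat\Psi_{j-1}$ (summed over $i_0,\dots,i_{j-2}$) times the factors involving $i_{j-1}$, namely $K_{\ell i_{j-1} i_j}\,(U_{-1,j-1})_{\ell i_{j-1}}\,(u_{j-1})_{i_{j-1}}$; summing over $i_{j-1}$ then reproduces exactly $\hat S_K$ applied to $\hat\Psi_{j-1}\odot U_{-1,j-1}\diag(u_{j-1})$. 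A symmetric backward induction, starting at $j=\ccT-1$ where only $i_\ccT$ is summed, yields the recursion for $\Psi_j$ using \eqref{eq:SK}.

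Finally, the boundary identities \eqref{eq:proj_0} and \eqref{eq:proj_T} follow by specializing the argument: for $j=0$ the past sum is empty and $\bU$ contains no factor $u_0$, so only $\Psi_0$ and $(U_{-1,0})_{\ell i_0}$ survive; for $j=\ccT$ the future sum is empty, leaving only $\hat\Psi_\ccT$ together with the non-summed factor $(U_{-1,\ccT})_{\ell i_\ccT}(u_\ccT)_{i_\ccT}$. No single step is deep, and the main obstacle is bookkeeping: one has to be careful about which spatial index is preserved and which is summed in each application of $\hat S_K$ and $S_K$, because in \eqref{eq:hatSK}--\eqref{eq:SK} the first index plays the role of $\ell$, while the remaining index that becomes the new endpoint of the reduced chain switches between $i_2$ and $i_3$ depending on whether the recursion is propagating forward or backward in time.
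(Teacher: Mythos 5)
Your proposal is correct and follows essentially the same route as the paper: split the sum defining $(P_{-1,j}(\bK\odot\bU))_{\ell i_j}$ into a forward partial sum over $i_0,\dots,i_{j-1}$ and a backward partial sum over $i_{j+1},\dots,i_\ccT$ linked at the fixed node $i_j$, pull out the factor $(U_{-1,j})_{\ell i_j}(u_j)_{i_j}$, and identify the two partial sums with $\hat\Psi_j$ and $\Psi_j$. The only difference is that you spell out the inductive verification that these partial sums satisfy the stated recursions via $\hat S_K$ and $S_K$, which the paper leaves implicit by simply writing the explicit summation formulas.
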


\begin{proof}
Note that
$\bK_{\ell i_0 \ldots i_\ccT} = \prod_{t = 0}^{\ccT-1} K_{\ell i_{t} i_{t+1}}$.
Together with \eqref{eq:U_multi_species_new}, this means that
\begin{align*}
& (P_{-1, j}(\bK \odot \bU))_{\ell i_j} = \sum_{\substack{i_0, \ldots, i_{j-1} \\ i_{j+1}, \ldots, i_{\ccT}}} \Bigg( \!\! \left(\prod_{t = 0}^{\ccT-1} K_{\ell i_{t} i_{t+1}} (U_{-1,0})_{\ell i_{0}} \! \right) \\
&\!\! \left(\prod_{t = 1}^\ccT (U_{-1,t})_{\ell i_{t}} \!\! \right) \! \!\! \left(\prod_{ t=1}^\ccT (u_t)_{i_t} \!\! \right) \!\!\! \Bigg) \!\! = (U_{-1,j})_{\ell i_{j}}(u_j)_{i_j} (\hat{\Psi}_j)_{\ell i_j} (\Psi_j)_{\ell i_j},
\end{align*}
where
$\hat \Psi_j$ and $\Psi_j$, for $j=1,\dots,\ccT$, are given by
\begin{align*}
\!\! (\hat{\Psi}_j)_{\ell i_j} \! & = \!\!\!\!\! \sum_{i_0, \ldots, i_{j-1} } \!\!\!\!  K_{\ell i_0 i_1} (U_{-1,0})_{\ell i_{0}} \! \left( \prod_{t = 1}^{j-1} K_{\ell i_{t} i_{t+1}} (U_{-1,t})_{\ell i_{t}} (u_t)_{i_t} \! \right) \!\! , \\
\!\!  (\Psi_j)_{\ell i_j} \! & = \sum_{i_{j+1}, \ldots, i_{\ccT}} \left(\prod_{t = j+1}^{\ccT} K_{\ell i_{t-1} i_{t}} (U_{-1,t})_{\ell i_{t}} (u_t)_{i_t} \right) \!\! . 
\end{align*}
This proves \eqref{eq:proj_j}, and similar derivations (omitted due to space constraints) yield the expressions \eqref{eq:proj_0} and \eqref{eq:proj_T}.
\end{proof}

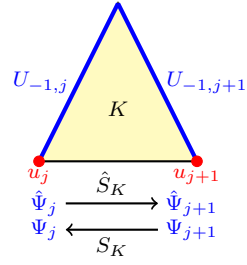
\begin{SCfigure}[1.2]
\begin{tikzpicture}[scale=0.7]
	\footnotesize
	\tikzstyle{main}=[circle, fill=white, minimum size = 12mm, thick, draw =black!80, node distance = 10mm]
	
\coordinate (-1) at (1.5, 3) ;
\coordinate (j) at (0,0);
\coordinate (j+1) at (3,0);	
	
\draw[thick,fill=yellow!30] (-1) -- (j)  -- (j+1) -- cycle;		
\draw[blue, ultra thick] (-1) --node[left]{$U_{-1,j}$} (j);
\draw[blue, ultra thick] (-1) --node[right]{$U_{-1,j+1}$} (j+1);

\draw[red, fill=red]   (j) circle (.1) node[below, red]{$u_{j}$}; 
\draw[red, fill=red]   (j+1) circle (.1) node[below, red]{$u_{j+1}$};

\node at (1.5, 1) {$K$};

\node[blue] (psij) at (0.1,-0.8) {$ \hat{\Psi}_j$};
\node[blue] (psij+1) at (2.9,-0.8) {$ \hat{\Psi}_{j+1}$};

\draw [->, thick] (psij) -- node[above]{$\hat{S}_K$} (psij+1);

\node[blue] (psihj) at (0.1,-1.3) {$\Psi_j$};
\node[blue] (psihj+1) at (2.9,-1.3) {$\Psi_{j+1}$};		

\draw [<-, thick] (psihj) -- node[below]{$S_K$} (psihj+1);
	
	\end{tikzpicture}
	\caption{ Illustration of how the structure of problem \eqref{eq:multispecies_discrete_rewritten} can be used for computations. The components in each triangle in Figure~\ref{fig:multispecies_triangles} give rise to computational components as indicated in this Figure. The operators $\hat{S}_K$ and $S_K$, defined in \eqref{eq:hatSK} and \eqref{eq:SK}, respectively, map these components forward and backwards over the triangles.} 	
	\label{fig:multispecies_triangle_computational}
\end{SCfigure}

\begin{remark}
The expressions in Theorem~\ref{thm:projections} can be seen as a message-passing scheme similar to \cite{haasler2020multi,fan2022complexity}.
More precisely, the operators $\hat S_K$ and $S_K$ are then interpreted as messages that propagate information forward and backwards, respectively, through the time instances $t=0,\dots,\ccT$.
Moreover, it is easy to adapt this to accommodate time-varying dynamics. In this case,
$\bC_{\ell i_0 \ldots i_\ccT} = \sum_{j = 0}^{\ccT - 1} C^j_{\ell i_j i_{j+1}}$ and $\bK_{\ell i_{0} \dots i_\ccT} = \prod_{j = 0}^{\ccT - 1} K^j_{\ell i_{j} i_{j+1}}$,
and $\hat S_K$ and $S_K$ in \eqref{eq:hatSK} and \eqref{eq:SK} are changed to $\hat S_{K^{j-1}}$ and $S_{K^j}$, respectively (cf.~Figure~\ref{fig:multispecies_triangle_computational}).
\end{remark}

\begin{algorithm}[tb]
	\begin{algorithmic}[1]
              \STATE Given: Initial guess $u_1,\dots, u_{\ccT}$, $U_{-1,0}, \ldots, U_{-1,\ccT}$.
              \STATE Denote $\mathscr{G}^{L}$ by $\mathscr{F}_\ccT^{L}$, and $G$ by $F_{\ccT}$.
		\WHILE{Not converged}	
		\FOR{ $j=1,\dots,\ccT$}
		\STATE Let $W_{-1, j}$ be so that $P_{-1,j}(\bK \odot \bU) \!=\!  U_{-1,j} \odot W_{-1, j}$. 
		Update $U_{-1,j}$ by solving \\$0 \in - U_{-1, j} \odot W_{-1, j} + \partial((\mathscr{F}_j^{L})^*)\big(-\! \epsilon\log(U_{-1, j}) \big)$.
		\STATE Let $w_{j}$ be so that $P_{j}(\bK \odot \bU) = u_j \odot w_j$. Update $u_j$ by solving $0 \in - u_{j} \odot w_{j} + \partial (F_{j}^*)\big(- \epsilon \log( u_{j})\big).$
		\ENDFOR
		\ENDWHILE
	  \RETURN $u_1,\dots,u_{\ccT}$, $U_{-1,0}, \ldots, U_{-1,\ccT}$
	\end{algorithmic}
	\caption{Method for solving \eqref{eq:multispecies_discrete_rewritten}.}
	\label{alg:multi_species}
\end{algorithm}

\section{Numerical example in coordination of multiple types of robots}\label{sec:example}

In this section, we illustrate the method by considering a numerical example of a robot coordination task.
The scenario is a search-and-rescue-type mission, with $L = 3$ different types of robots and three different types of terrains. The goal for the robots is to, at the last time point, cover the entire area, and to do so as cheap as possible. The exact costs are defined below. The set-up is shown in Figure~\ref{subfig:terrain}, where
blue area is water, red area is rough terrain, and green area is normal terrain. Moreover, the three different types of robots start in the three areas marked in the lower left corner of the figure: robot type 1, which start in the dark blue starting area, can move on water and in normal terrain; robot type 2, which start in the dark red starting area, can move in rough terrain and normal terrain; and robot type 3, which start in the black starting area, can only move in normal terrain.

The state space is the rectangle $[-1,1] \times [-1,1]$, which we uniformly discretize it into $100 \times 100$ grid points; the latter are denoted $x_i = x_{i_1,i_2}$ for $i_1,i_2 = 1, \ldots, 100$, and the distance (in each direction) between discrete points is denoted $\Delta x$. Moreover, time is discretized into $\ccT+1 = 61$ time steps. The dynamics for each robot type is taken to be
$f_\ell(x) \equiv 0$ and $B_\ell(x) = (1/\sqrt{\alpha_\ell}) I$, where $1/\sqrt{\alpha_\ell}$ is a robot-type-dependent weight modeling the energy efficiency of the robot type. By the reparametrization
$\tilde{v}_\ell = (1/\sqrt{\alpha_\ell}) v_\ell$, $\alpha_\ell$ can equivalently understood as a cost of movement for robot type $\ell$.
However, the distance each type of robots can travel with one time step is also limited: robot type 1 and 2 can travel to points inside a circle of radius $\sqrt{6} \Delta x$, and robot type 3 can travel to points inside a circle of radius $3 \Delta x$. In free terrain, this results in the corresponding discrete movement stencils shown in Figure~\ref{subfig:movement}, but we also disallow robots to ``jump over'' areas where they cannot enter.
This means that the cost tensor has elements \eqref{eq:C_elem} given by
$C_{\ell i k} = \alpha_\ell \|x_{i_1, i_2} - x_{k_1, k_2}\|^2$ if, for robot type $\ell$, state $x_{k_1, k_2}$ is in range from state $x_{i_1, i_2}$, and $C_{\ell i k} = \infty$ else.
This means that the corresponding $\bK$ in \eqref{eq:K_multi_species_new} is a sparse tensor, since $K_{\ell ik} = 0$ if $C_{\ell ik} = \infty$. We set $\alpha_1 = 400$, $\alpha_2 = 400$, and $\alpha_3 = 100$.

More precisely, we consider the discrete problem
  \begin{align*}  
    \minwrt[\substack{\bM \in \RR_+^{3(100^2)^{61}}\\ \mu_j,\, \mu_j^{(\ell)} \in \RR_+^{100^2}}]
        &  \; \langle \bC, \bM \rangle \! + \! 0.2 \, D(\bM) \! + \! \sum_{j=1}^{59} \! \left( \!  F(\mu_j) \! + \! \sum_{\ell = 1}^3 \langle c_{\ell}, \mu_j^{(\ell)} \rangle \! \right) \\
    \text{subject to }  \; & \; P_{-1,j}(\bM) =[ \mu_j^{(1)}, \mu_j^{(2)}, \mu_j^{(3)}]^T,  \\
     & \;  \sum_{\ell = 1}^3\mu_j^{(\ell)} = \mu_j,
     \quad \mu_j^{(\ell)} \leq \kappa^{(\ell)}, \quad \mu_0^{(\ell)} = \mu_{0, \ell}, \\
     & \;\mbox{for } \ell = 1, 2, 3, \mbox{ and } j= 0,1,\ldots, 60, \\
     & \; \mu_{60} \mbox{ uniform outside starting areas, see below.}
  \end{align*}
The total mass of each robot type
is set to $10$,
and the starting distributions $\mu_{0, \ell}$ are set to even distribution in each robots starting area. 
As final distribution $\mu_{60}$, we enforce a uniform distribution of total mass 10 in all areas outside the starting areas; inside the starting areas, no constraint is enforced at the last time point.
Moreover, the running cost $F$ is a cost for congestion in states outside of the starting areas: $F(\mu) = \sum_{(i_1, i_2) \not \in \text{starting area}} f((\mu)_{i_1, i_2})$ where $f : \RR_+ \to \RR_+ \cup \{ \infty \}$, $f(x) = x/(1 - x) + I_{[0, 1]}(x)$ where $I_{A}(x)$ is the indicator function on a set $A$, i.e., $I_{A}(x) = 0$ if $x \in A$ and $\infty$ else.
The running cost $c_\ell$ is a fixed cost for each time step a robot is deployed, i.e., it is $0$ in the starting area of robot type $\ell$ and equal to a constant $\tilde{c}_\ell$ in all other points in state space. In particular, $\tilde{c}_1 = 0.2$, $\tilde{c}_2 = 0.2$, and $\tilde{c}_3 = 0.1$.
The constraint $\kappa^{(\ell)}$ is zero in regions where the robot type cannot move (including other robot types starting areas), it is 10 in the starting regions of robot type $\ell$, and it is 1 elsewhere. The latter has no impact on the optimal solution, since the running cost $F$ limits the total density to $1$ in any given point.

The problem is solved using Algorithm~\ref{alg:multi_species}, where the projections needed in the algorithm are computed using Theorem~\ref{thm:projections}. The optimal solution is shown in Figure~\ref{fig:ex_solution}.
It is nontrivial to allocate the robots due to interaction costs and capacity constraints, nevertheless, the behaviors in the solution is consistent with the intuition. Robot type 1 and type 2 focus on the regions where only they can reach, while robot type 3 covers most of the area that all robots can reach due to the smaller cost and larger speed. It can also be seen that deployment of the robots is delayed as much as possible due to the cost of being outside the starting region.

\begin{figure}[tbh]
\begin{center}
  \subfigure[Terrain and starting positions.]{
      \includegraphics[trim=4.5cm 1.2cm 4.5cm 1.1cm, clip=true, width=0.27\textwidth]{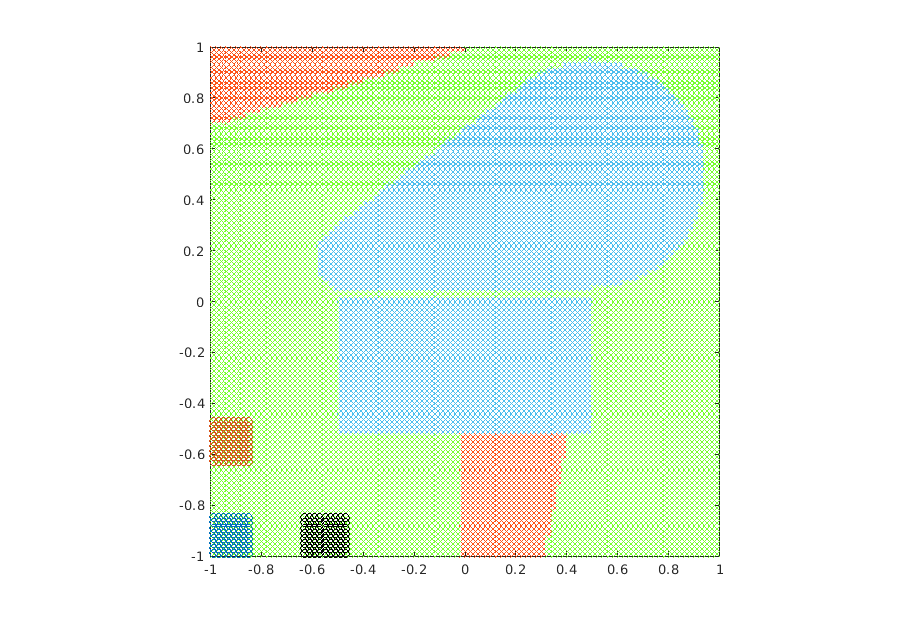}
      \label{subfig:terrain} }
      \subfigure[Movement patterns.]{
      \includegraphics[trim=4cm 3cm 10cm 4cm, clip=true,width=0.17\textwidth]{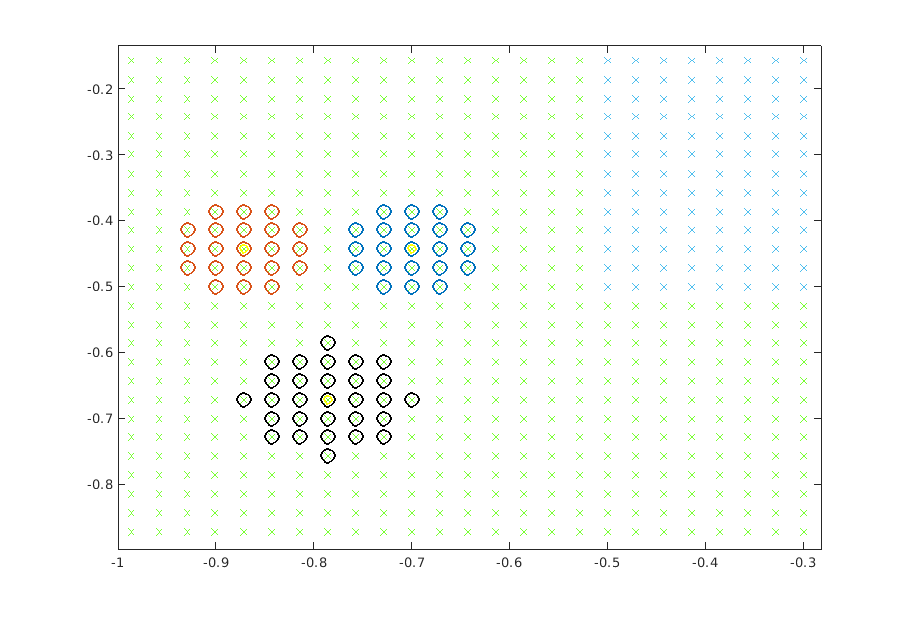}
      \label{subfig:movement}
      }
     \caption{Set-up for the numerical example. In (a), blue area is water, red area is rough terrain, and green area is normal terrain. The three different types of robots start in the three areas marked in the lower left corner: robot type 1 in the dark blue area, robot type 2 in the dark red area, and robot type 3 in the black area. In (b), the color of the (free) movement stencils are the same as the starting positions of the corresponding robot type. \vspace{-0.5cm}}
    \label{fig:ex_setup}
  \end{center}
\end{figure}

\begin{figure*}[tbh]
  \centering
  \includegraphics[trim=1.5cm 0.5cm 2.2cm 0.2cm, clip=true, width=0.98\textwidth]{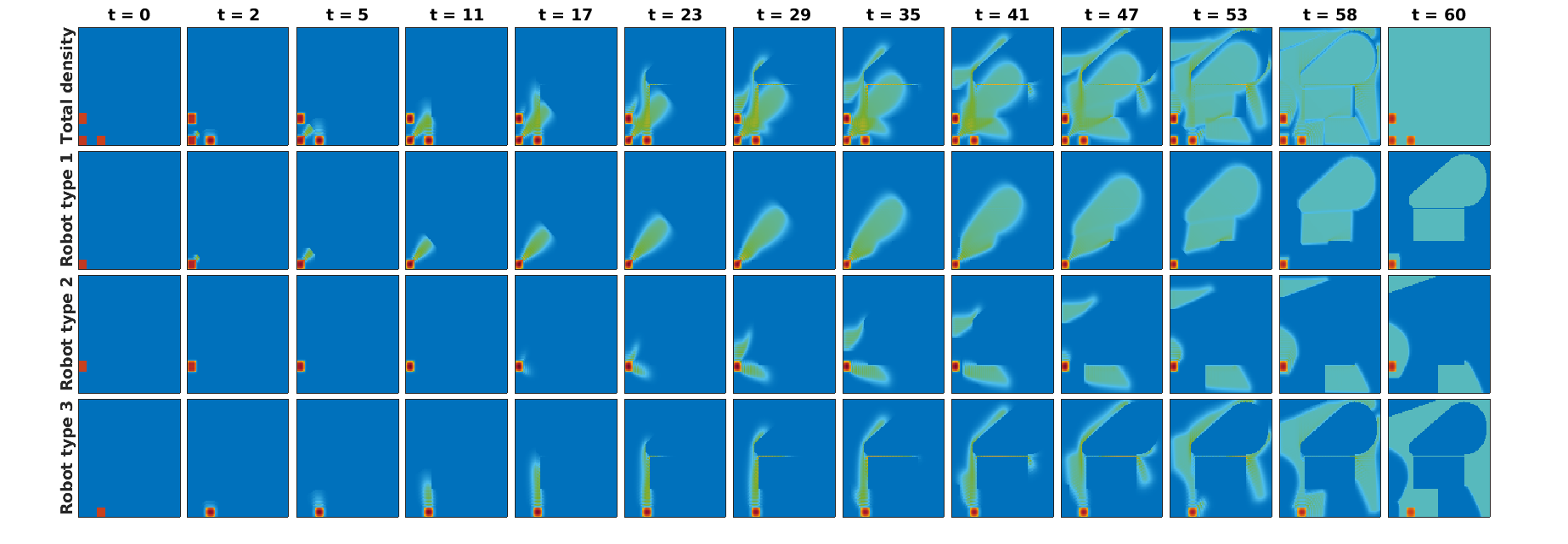}
  \caption{Time evolution of optimal total density and optimal densities of the different robots types.}
  \label{fig:ex_solution}
\end{figure*}

\section{Conclusions and future directions}\label{sec:conclusions}
In this work we developed an efficient method for multispecies mean field type control problems, where each species have different dynamics.
We also illustrated its use by solving a robot coordination task for a search-and-rescue-type scenario. One limitation of our method is that it becomes ill-conditioned when the intensity of noise $\epsilon$ becomes too small. One future direction is to address this issue by incorporating ideas from proximal point methods.

\balance
\bibliographystyle{plain}
\bibliography{./ref}

\end{document}